\newtheorem{theorem}{Theorem}[section]
\newtheorem{lemma}[theorem]{Lemma}
\newtheorem{proposition}[theorem]{Proposition}
\newtheorem{corollary}[theorem]{Corollary}
\newtheorem{definition}[theorem]{Definition}
\newtheorem{remark}[theorem]{Remark}
\begin{document}

\title{Smoothings of singularities and symplectic surgery}

\author{Heesang Park}

\address{School of Mathematics, Korea Institute for Advanced Study, Seoul
  130-722, Korea}

\email{hspark@kias.re.kr}

\author{Andr\'{a}s I. Stipsicz}

\address{R\'enyi Institute of Mathematics, Budapest, Hungary}

\email{stipsicz.andras@math-inst.hu}

\maketitle

\begin{abstract}
Suppose that $C=(C_1, \ldots , C_m)$ is a configuration of
2-dimensional symplectic submanifolds in a symplectic 4-manifold $(X,
\omega)$ with connected, negative definite intersection graph $\Gamma
_C$. We show that by replacing an appropriate neighborhood of $\cup
C_i$ with a smoothing $W_S$ of a normal surface singularity $(S, 0)$
with resolution graph $\Gamma _C$, the resulting 4-manifold admits a
symplectic structure. This operation generalizes the rational
blow-down operation of Fintushel-Stern for other configurations, and
therefore extends Symington's result about symplectic rational
blow-downs.
\end{abstract}

\section{Introduction}
\label{sec:intro}
Suppose that $X$ is a closed, oriented 4-manifold.  Recall that in the
rational blow-down procedure (introduced by Fintushel and Stern
\cite{FS} and extended by J. Park \cite{jpark}) the tubular
neighbourhood of a collection of embedded spheres $S=(S_1, \ldots ,
S_k)$ in $X$ is replaced by a specific compact 4-manifold $W_S$ with
boundary, providing the closed 4-manifold $X_S$.  The spheres
intersect each other according to a linear graph $\Gamma _S$, and
their self-intersections are determined by the continued fraction
coefficients of the ratio $-\frac{p^2}{pq-1}$ for some $p>q>0$
relatively prime integers. The 4-manifold $W_S$ in the construction
can be interpreted as a particular smoothing of the singularity which
has the plumbing graph $\Gamma _S$ as its resolution graph.  (In fact,
in the rational blow-down construction $W_S$ admits the further
property that it is a rational homology disk, that is, $H_*(W_S; {\mathbb {Q}}
)\cong H_* (D^4; {\mathbb {Q}} )$.)

The success of the rational blow-down construction stems from the fact
that it produces 4-manifolds with interesting differential topology.
The curiosity of the resuting smooth structure can be measured by the
Seiberg-Witten invariants of $X_S$. In fact, for the rational
blow-down construction there is a simple relation between the
Seiberg-Witten invariants of the 4-manifold $X$ and the resulting
4-manifold $X_S$ \cite{FS}.  In specific cases the nonvanishing of the
Seiberg-Witten invariant of $X_S$ can be explained using symplectic
topology: according to a result of Symington \cite{Sym1, Sym2}, if
$(X, \omega )$ is a symplectic 4-manifold and the spheres in the
configuration $S$ are symplectic submanifolds (intersecting
$\omega$-orthogonally), then $X_S$ admits a symplectic structure
(hence by Taubes' theorem \cite{Taubes} it has nontrivial
Seiberg-Witten invariants).  This symplectic feature of the
construction has been extended to further configurations of symplectic
surfaces in symplectic 4-manifolds and further smoothings of
singularities in \cite{AO, GM, GS1, G-S}. The general case, however, stayed
open and was formulated as Conjecture~1.4 in \cite{G-S}. The aim of
the present paper is to prove this conjecture. Informally, the result
says that if we have a configuration of symplectic surfaces in a
symplectic 4-manifold which intersect each other according to a
negative definite matrix, we collapse them to a point, and deform the
resulting complex singularity, then the deformation 'globalizes in the
symplectic category'.

To formulate the theorem precisely, suppose that $(X, \omega )$ is a
closed symplectic 4-manifold and $C=(C_1, \ldots , C_m)$ is a
collection of smooth, closed 2-dimensional submanifolds which satisfy
the following properties:
\begin{itemize}
\item each $C_i$ is a symplectic submanifold and $C=\cup C_i$ is connected,
\item $C_i$ intersects each other $C_j$ $\omega$-orthogonally in at most one
  point, and
\item the intersection matrix $I=(C_i \cdot C_j)$ (with the
self-intersections in the diagonal) is negative definite.
\end{itemize}
Suppose that $\Gamma _C$ is the (connected) plumbing graph
corresponding to the curve configuration $C$. By our assumption it is
a negative definite plumbing graph, where the vertex $v$ corresponding
to the surface $C_v$ is decorated by the self-intersection
$e_v=C_v\cdot C_v<0$ and by the genus $g_v=g(C_v)\geq 0$. According to
a fundamental result of Grauert \cite{grau}, for any such graph there
is a normal surface singularity $(S,0)$ with resolution dual graph
equal to $\Gamma _C$.  (The analytic type of the singularity is not
necessarily unique, although its topology is determined by the graph
$\Gamma _C$.) Suppose that $W_C$ is a smoothing (or Milnor fiber) of
the chosen singularity $(S,0)$.

The main result of the present paper then reads as follows.
\begin{theorem}\label{thm:main}
  With the notations above, let $\nu C$ denote a tubular neighbourhood of the
  union $\cup C_i$. Then, under the assumptions listed above, there is an
  orientation-reversing diffeomorphism $\phi \colon \partial (X-{\rm {int
    }}\nu C)\to \partial W_C$ such that the glued-up manifold
\[
X_C=(X-{\rm {int}}\nu C)\cup _{\phi} W_C
\]
admits a symplectic structure which is equal to the restriction of
$\omega$ over $X-\nu C$.
\end{theorem}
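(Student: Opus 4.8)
The plan is to exhibit $X_C$ as the result of replacing one strong symplectic filling of a contact $3$--manifold by another. Write $Y=\partial(X-\mathrm{int}\,\nu C)$ for the link of the singularity, oriented as the boundary of the regular neighborhood, and let $\xi_{\mathrm{can}}$ denote the canonical (Milnor fillable) contact structure that $Y$ carries as the link of $(S,0)$; by the theorem of Caubel--N\'emethi--Popescu-Pampu the isotopy class of $\xi_{\mathrm{can}}$ depends only on the resolution graph $\Gamma_C$. I would prove Theorem~\ref{thm:main} in three steps: (i) the complement $(X-\mathrm{int}\,\nu C,\omega)$ provides a concave filling (a symplectic cap) for $(Y,\xi_{\mathrm{can}})$; (ii) the Milnor fiber $W_C$ is a convex, indeed Stein, filling of the same contact manifold $(Y,\xi_{\mathrm{can}})$; and (iii) a concave cap and a convex filling inducing the same contact structure can be glued along a contactomorphism into a closed symplectic manifold, with the symplectic form left unchanged away from the gluing region.

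The decisive point of step (i) is that a negative definite symplectic configuration has a \emph{convex} neighborhood, so that the complement inherits a concave contact-type boundary. Since each $C_i$ is symplectic, the $C_i$ meet $\omega$--orthogonally, and the genera $g_v$ and the areas $\int_{C_v}\omega$ are prescribed, a symplectic neighborhood theorem for the configuration identifies the germ of $\omega$ along $\cup C_i$ with a symplectic plumbing of disk bundles determined by $\Gamma_C$ and these areas. On this model the negative definiteness of $I=(C_i\cdot C_j)$ is precisely Grauert's contractibility criterion \cite{grau}, and it permits the construction of a Liouville (equivalently, strictly plurisubharmonic) structure whose level sets near $\partial\nu C$ are of contact type with the Liouville field pointing away from $\cup C_i$; this realizes $\nu C$ as a convex filling and hence $X-\mathrm{int}\,\nu C$ as a concave cap. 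To identify the resulting contact structure on $Y$, I would compare this Liouville structure with that of a K\"ahler neighborhood of the exceptional divisor in the minimal resolution $\widetilde S\to S$ supplied by \cite{grau}: both are convex structures supported by the negative definite graph $\Gamma_C$, the K\"ahler one inducing $\xi_{\mathrm{can}}$ on $Y$ by definition, so the induced contact structure is Milnor fillable with resolution graph $\Gamma_C$ and therefore isotopic to $\xi_{\mathrm{can}}$.

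Step (ii) is classical singularity theory: embedding $(S,0)\subset(\mathbb{C}^N,0)$ and intersecting a nearby smooth fiber with a small ball realizes $W_C$ as a sublevel set of the restriction of $\tfrac14|z|^2$, which is strictly plurisubharmonic, so that $W_C$ is a Stein domain whose contact boundary is the Milnor fillable structure on the link; thus $(\partial W_C,\xi|_{\partial W_C})$ is contactomorphic to $(Y,\xi_{\mathrm{can}})$. For step (iii), steps (i) and (ii) together produce the required orientation-reversing diffeomorphism $\phi\colon\partial(X-\mathrm{int}\,\nu C)\to\partial W_C$ as a contactomorphism of $(Y,\xi_{\mathrm{can}})$. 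Near each boundary the Liouville structure makes $\omega$ standard on a collar modeled on the symplectization $(Y\times\mathbb{R},d(e^t\alpha))$; matching the two contact forms up to scale via Gray's stability theorem and interpolating over the collars lets the symplectic forms patch across $Y$, yielding a symplectic form on $X_C=(X-\mathrm{int}\,\nu C)\cup_\phi W_C$ that agrees with $\omega$ over $X-\nu C$.

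I expect the main obstacle to be step (i), and within it the two features that go beyond Symington's treatment \cite{Sym1,Sym2} of linear chains of symplectic spheres: first, manufacturing a genuinely convex symplectic neighborhood for an \emph{arbitrary} negative definite configuration, with vertices of positive genus and an arbitrary, possibly branched, plumbing graph, where the explicit toric and almost-toric neighborhood models are unavailable; and second, pinning the induced contact structure on $Y$ down to \emph{exactly} $\xi_{\mathrm{can}}$ rather than to some a priori different contact structure on the correct underlying $3$--manifold. The technical heart should therefore be the hands-on construction of the plurisubharmonic function on the negative definite symplectic plumbing together with the verification that its contact boundary is Milnor fillable; granting this, the gluing in step (iii) is formal and step (ii) is standard.
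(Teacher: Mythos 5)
Your overall architecture (convex neighborhood of the configuration, Stein structure on the Milnor fiber, symplectic gluing along a contactomorphism of the common contact boundary) is the same as the paper's, and your steps (ii) and (iii) are indeed standard and match what the paper does by citing \cite{Etn} and \cite{OS}. The problem is step (i), and specifically the sentence where you conclude that the contact structure induced on $\partial\nu C$ ``is Milnor fillable with resolution graph $\Gamma_C$ and therefore isotopic to $\xi_{\mathrm{can}}$'' because both it and the K\"ahler neighborhood of the exceptional divisor are ``convex structures supported by the negative definite graph.'' This inference is a non sequitur, and it is exactly the point at which the whole difficulty of the theorem is concentrated: this identification is Conjecture~1.4 of \cite{G-S}, which this paper was written to prove. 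There is no general principle saying that two convex (Liouville) fillings with the same underlying negative definite plumbing induce contactomorphic contact structures on the boundary -- plumbed $3$--manifolds typically carry several non-isotopic tight, fillable contact structures, some of whose fillings are diffeomorphic to the plumbing itself. Nor does symplectic convexity of a neighborhood make its boundary contact structure Milnor fillable: Milnor fillability means the contact structure is realized by a complex-analytic (CR) structure on the link, which a purely symplectic convex neighborhood does not provide. Even the repair you hint at -- using the symplectic neighborhood theorem to match the Gay--Stipsicz neighborhood with a K\"ahler neighborhood of $E\subset\widetilde S$ -- requires matching areas and $\omega$-orthogonality of intersections in the K\"ahler model, and then a uniqueness statement for contact boundaries of \emph{nested} convex neighborhoods of a fixed germ; none of this is supplied, and it is not soft.

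What the paper actually does to close this gap is entirely different in mechanism: it compares the two contact structures through \emph{horizontal open book decompositions} of the plumbed $3$--manifold. On the Milnor side, Proposition~\ref{pro-CNP} (Caubel--N\'emethi--Popescu-Pampu) produces a horizontal open book compatible with $\xi_M$ with positive binding numbers $n_v=-D\cdot E_v$, and Lemma~\ref{lemma-multiple} allows one to pass to any positive multiple $k\mathbf{n}$. On the symplectic side, Theorem~\ref{theorem-HOB} (built from the local model of Lemma~\ref{lemma-HOB}, with the vector $\mathbf{N}$ solving $\mathbf{N}\cdot I=-\mathbf{n}$ and made integral by the factor $k$) constructs a horizontal open book compatible with $\xi_C$ having the same binding numbers $kn_v$. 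The rigidity theorem of \cite{C-N-P} (Theorem~\ref{thm:equiv}), a generalization of Stallings' result, then says that two horizontal open books with equal positive binding data are equivalent, so $\xi_C$ and $\xi_M$ are contactomorphic (Corollary~\ref{cor:xiM=xiC}). Your proposal correctly isolates where the difficulty lies, but the method you propose for resolving it would not succeed; some substitute for the open book comparison (or an equally strong uniqueness theorem for convex neighborhoods) is indispensable.
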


The main idea of the proof of the above result is the following: by
\cite{G-S} the configuration $C=\cup _i C_i$ admits an $\omega$-convex
neighbourhood $U_C$, with boundary $\partial U_C$ supporting a
compatible contact structure $\xi _C$. The smoothing $W_C$, on the
other hand, admits the structure of a Stein domain, inducing the
so-called \emph{Milnor fillable} contact structure $\xi _M$ on
$\partial W_C$.  By our assumption $\partial U_C$ and $\partial W_C$
are orientation preserving diffeomorphic 3-manifolds. The main tool in
verifying Theorem~\ref{thm:main} is the result showing that $\xi _C$
and $\xi _M$ are, in fact, contactomorphic. Therefore taking a
contactomorphism $\psi \colon (\partial U_C, \xi _C)\to (\partial W_C,
\xi _M)$, the gluing of symplectic 4-manifolds along contact
hypersurfaces (as it is explained in \cite{Etn}, cf. also \cite{OS})
concludes the argument.  In turn, the fact that the two contact
structures $\xi _C$ and $\xi _M$ are contactomorphic will be proved by
relating two compatible open book decompositions. The existence of
this contactomrphism was verified in \cite{AO, G-S} for specific
families of configurations of $C$; in this paper we extend the result
of \cite{G-S} by constructing an appropriate horizontal open book
decomposition on $\partial U_C$ compatible with $\xi _C$
(cd. Theorem~\ref{theorem-HOB}.)

The paper is organized as follows. In Section~\ref{sec:hob} we quickly
recall some facts about horizontal open book decompositions, and in
Section~\ref{sec:comp} we give the proof of the main result of the
paper. In Section~\ref{sec:example} we show an example where the
resolution graph involves curves of high genus.

{\bf Acknowledgements:} HP would like to thank Andras N\'{e}methi for valuable
discussions. The main part of the work in this paper was done during HP's
visit (supported by W-PR program of KIAS) to the Alfr\'{e}d R\'{e}nyi
Institute of Mathematics, whose hospitality and support are gratefully
acknowledged. HP was supported by Basic Science Research Program through the
National Research Foundation of Korea (NRF-2011-0012111). AS was supported by
the \emph{Lend\"ulet} project of the Hungarian Academy of Sciences and by ERC
Grant LTDBud.

\section{Horizontal open book decompositions}
\label{sec:hob}

By the Giroux correspondance \cite{Gi}, open book decompositions
play a central role in contact topology. For completeness, in this section
we recall some facts and constructions regarding specific open book
decompositions on plumbed 3-manifolds. We start with a general definition.

\begin{definition}
  Suppose that $Y$ is a given closed, oriented 3-manifold.  The pair
  $(B, \varphi )$ is an {\bf open book decomposition} on $Y$ if
  $B\subset Y$ is an oriented 1-dimensional submanifold and $\varphi
  \colon Y-B \to S^1$ is a locally trivial fibration with the property
  that a fiber $\varphi ^{-1}(t)$ is the interior of a Seifert surface
  of $B$. The submanifold $B$ is called the {\bf binding} of the open
  book, while the closure of a fiber $\varphi ^{-1}(t)$ is called a
  {\bf page}.  Two open book decompositions $(B,\varphi )$ and $(B',
  \varphi ')$ of the diffeomorphic 3-manifolds $Y$ and $Y'$ are {\bf
    equivalent} if there is an orientation preserving diffeomorphism
  $g \colon Y \to Y'$ with the properties that $g (B)=B'$ (as oriented
  1-manifolds) and $\varphi =\varphi '\circ g$.
\end{definition}

According to the Giroux correspondence \cite{Gi}, an open book decomposition
uniquely determines an isotopy class of compatible contact structures. Recall
that the contact form $\alpha$ is \emph{compatible} with the open book
decompositions $(B, \varphi )$ if $B$ is tangent to the Reeb flow $R_{\alpha
}$ defined by $\alpha$, while the interiors of the pages are transverse to the
Reeb flow.

By a classical result of Stallings \cite{Stall}, in a rational homology
3-sphere an open book decomposition is determined by its binding. For
manifolds with $b_1>0$ this principle no longer holds. By considering specific
classes of 3-manifolds and open book decompositions, however, a similar
statment can be proved.  For the statement we need a little preparation. (For
related notions, see also \cite{EO}.) Suppose that $Y$ is a graph manifold,
that is, it is given by the plumbing construction along a weighted graph
$\Gamma$.  This means that we consider circle bundles over the surfaces
corresponding to the vertices (with Euler numbers specified by the framings of
the graph) and plumb these pieces together.
\begin{definition}
  An open book decomposition $(B, \varphi )$ on a graph manifold $Y$ is {\bf
    horizontal} if the binding $B$ is the union of fibers of the individual
  fibrations, the pages are transverse to these fibrations and the orientation
  induced by the pages on the binding coincides with the orientation given by
  the fibration.
\end{definition}
Let ${\bf {n}}=(n_v)$ denote the vector of nonnegative numbers specified by
the binding components at each vertex $v$ of the plumbing graph $\Gamma$.  Now
the version of Stallings' result (due to Caubel--N\'emethi--Popescu-Pampu) is
the following:
\begin{theorem}[\cite{C-N-P}]\label{thm:equiv}
Suppose that $(B,\varphi )$ and $(B',\varphi ')$ are two horizontal open book
decompositions of the plumbing 3-manifold $Y=Y_{\Gamma}$.
If ${\bf {n}}={\bf {n'}}$ and for each vertex $v$ we have $n_v=n'_v>0$ then
the two open book decompositions are equivalent, and hence the
compatible contact structures are contactomorphic. \qed
\end{theorem}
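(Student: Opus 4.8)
The plan is to derive the statement from the classical fibration theorem of Stallings, using the horizontality and the multiplicity data $\mathbf n = \mathbf n'$ to remove the ambiguity that appears when $b_1(Y)>0$. Recall that Stallings' theorem guarantees that a fibration of an irreducible $3$-manifold $M$ over $S^1$ is determined up to isotopy by the induced class in $H^1(M;\mathbb Z)$, provided the fiber is incompressible and is not a sphere or a disk. Applied to $M = Y\setminus \mathrm{int}\,\nu B$, this reduces the problem to two tasks: arranging that the bindings $B$ and $B'$ coincide up to an orientation-preserving isotopy of $Y$, and then showing that the fibrations $\varphi,\varphi'$ represent the same class in $H^1(Y\setminus \mathrm{int}\,\nu B;\mathbb Z)$.

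For the bindings, note that by definition each of $B$ and $B'$ is a union of regular Seifert fibers of the individual circle bundles, with exactly $n_v$ (respectively $n'_v$) components lying in the piece over the vertex $v$. Any two collections of the same number of regular fibers inside a fixed Seifert piece are ambiently isotopic within that piece, and these isotopies can be carried out simultaneously while keeping the plumbing tori fixed. Since $n_v=n'_v$ for every $v$, I would first isotope $B'$ onto $B$ (the fiber orientations agreeing by the horizontality condition), so that from now on $B=B'$ and we work in the fixed complement $M=Y\setminus \mathrm{int}\,\nu B$.

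It then remains to compare the classes $[\varphi],[\varphi']\in H^1(M;\mathbb Z)$, and two kinds of test curves pin these down. First, the meridian of each binding component evaluates to $1$ on both $[\varphi]$ and $[\varphi']$, since each page is a Seifert surface of $B$ and meets such a meridian once; this is automatic for any open book. Second, and here horizontality enters, a regular fiber $f_v$ of the Seifert piece over $v$ is transverse to the pages, so $\langle [\varphi], f_v\rangle$ equals the number of times a page crosses $f_v$, i.e. the covering degree of the horizontal page over the base of that piece. This degree is not free: the existence of a horizontal surface in a Seifert piece with Euler number $e_v$ and $n_v$ prescribed boundary fibers forces a relation determining the degree, together with the page's boundary slopes on the plumbing tori, in terms of $\mathbf n$ and the decorations of $\Gamma$ alone. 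Consequently $\langle [\varphi], f_v\rangle = \langle [\varphi'], f_v\rangle$ for all $v$. Because every $n_v>0$, each piece genuinely meets the binding, and the meridians together with the fiber classes $f_v$ generate the relevant part of $H^1(M;\mathbb Z)$; hence $[\varphi]=[\varphi']$, and Stallings' theorem yields an isotopy carrying $\varphi$ to $\varphi'$. The resulting diffeomorphism of $Y$ realizes the equivalence of the two open books, and the contactomorphism of the compatible contact structures then follows from the Giroux correspondence.

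The step I expect to be the main obstacle is the identification of $[\varphi]$, namely verifying that the meridional evaluations and the horizontal fiber-degrees genuinely determine the class in $H^1(M;\mathbb Z)$ rather than leaving a free summand unaccounted for. This is precisely where the hypothesis $n_v>0$ for all $v$ is indispensable: a vanishing multiplicity would allow a page to avoid the fibers over that vertex, so that the corresponding fiber class is no longer controlled by the open book and inequivalent horizontal open books with identical bindings could occur. Care is also needed to confirm that $M$ satisfies the hypotheses of Stallings' theorem — irreducibility of $Y$ and incompressibility of the horizontal, hence $\pi_1$-injective, pages — which hold because $\Gamma$ is a connected negative definite plumbing graph and the pages are honest multisections of the Seifert pieces.
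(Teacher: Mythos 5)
The paper itself offers no proof of this statement --- it is imported verbatim from \cite{C-N-P} --- so your argument has to stand on its own, and it has a genuine gap, located exactly at the step you flagged. The claim that the meridians of $B$ together with the fiber classes $f_v$ ``generate the relevant part of $H^1(M;\mathbb{Z})$'' is false whenever $b_1(Y)>0$, which is precisely the case the theorem exists to handle (if all $g_v=0$ and $\Gamma$ is a tree, $Y$ is a rational homology sphere and the classical Stallings-type argument already applies, as the paper notes). Indeed, every meridian bounds a disk in $Y$, and the fiber classes generate the finite group $\operatorname{coker}(I)$ because $I$ is nondegenerate; so both kinds of test classes die in $H_1(Y;\mathbb{Q})$. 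Since $H_1(M;\mathbb{Q})\to H_1(Y;\mathbb{Q})$ is onto and $H_1(Y;\mathbb{Q})\cong \mathbb{Q}^{2\sum_v g_v+b_1(\Gamma)}$, the subgroup of $H_1(M;\mathbb{Z})$ spanned by meridians and fibers has infinite index as soon as some $g_v>0$ or $\Gamma$ contains a cycle; lifts of genus cycles of the $C_v$ and of cycles of $\Gamma$ are completely uncontrolled by your evaluations. The hypothesis $n_v>0$ does not repair this, contrary to your diagnosis: the missing classes are not fiber classes. (As a side remark, the uniqueness-up-to-isotopy of fibrations within a fixed cohomology class is not Stallings' theorem but a Waldhausen/Tollefson-type result; this is a citation issue, not the gap.)

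Worse, the intermediate statement you aim for is actually false, so no refinement of this strategy can close the gap: after isotoping $B'$ onto $B$, the two fibration classes need not agree in $H^1(M;\mathbb{Z})$. Concretely, take a horizontal open book $(B,\varphi)$, a vertex $v$ with $g_v>0$, and a map $g\colon A_v\to S^1$ on the base of the $v$-th piece which is constant near all of $\partial A_v$ and near the binding disks but has $[g]\neq 0$ in $H^1(A_v;\mathbb{Z})$ (possible since genus classes vanish on the boundary circles). Setting $\varphi'=\varphi+g\circ\pi_v$ inside that piece and $\varphi'=\varphi$ elsewhere produces another horizontal open book with the same binding, the same multiplicities $n_v$ and the same fiber degrees $N_v$ --- the twist is constant along the fibers, so transversality and the local model near $B$ are untouched --- yet $[\varphi']=[\varphi]+\pi_v^*[g]\neq[\varphi]$. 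Thus $\varphi$ and $\varphi'$ are not even homotopic as maps $M\to S^1$, and no theorem asserting that homotopic fibrations are isotopic can identify them. They \emph{are} equivalent, but only through a diffeomorphism of $Y$ acting nontrivially on homology: pushing a binding component over $v$ around a loop $\gamma\subset C_v$ is a fiber-preserving diffeomorphism of $(Y,B)$ which changes the value of the fibration class on a lifted cycle ${\tilde c}$ by $(c\cdot\gamma)$ times the meridian value, modulo $N_v$, and together with vertical Dehn twists along vertical tori this absorbs exactly the twists above. This is the true role of the hypothesis $n_v>0$: one needs a binding component over \emph{each} vertex to push around. Any correct proof (including the model-building argument of \cite{C-N-P}) must therefore construct such a nontrivial self-diffeomorphism of $Y$, rather than fix $B$ and compare cohomology classes; the parts of your argument that do survive are the isotopy of the bindings and the determination of the degrees via $\mathbf{N}\cdot I=-\mathbf{n}$, which is Equation~\eqref{eq:condition} of the paper.
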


By another result of Caubel--N\'emethi--Popescu-Pampu,
horizontal open book decompositions
compatible with the Milnor fillable contact structure $\xi _M$
are easy to construct:
\begin{proposition}[Theorem 4.1 of \cite{C-N-P}]\label{pro-CNP}
  Let $p:({\tilde {S}}, E) \to (S, 0)$ be a good resolution of a
  normal surface singularity $(S, 0)$, where $E$ is a normal crossing
  divisor in ${\tilde {S}}$ having smooth components $E_1,\ldots ,E_m$
  with $E=\sum _i E_i$. Assume that the nonzero effective divisor
  $D=\sum d_i E_i$ ($d_i \in \mathbb{N}$) satisfies
\begin{equation*}
(D+E+K_{\Sigma})\cdot E_i +2 \leq0  \text{ for any $i=1,\ldots ,m$.}
\end{equation*}
Then there exists a holomorphic function on $(S, 0)$ with an isolated
singularity at $0$ such that $div(f\circ p)$ is a normal crossing divisor on
${\tilde {S}}$ and the exceptional part of $div(f\circ p)$ is $D$. Moreover,
for each $i$, the number of intersection points $n_i=div(f\circ p)_s \cdot
E_i$ is the strictly positive, where $div(f\circ p)_s$ is strict transform
part of $div(f\circ p)$. \qed
\end{proposition}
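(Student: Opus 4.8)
The plan is to realize the desired function $f$ as the pushforward of a carefully chosen section of the ideal sheaf $\mathcal{O}_{\tilde{S}}(-D)\subset \mathcal{O}_{\tilde{S}}$. By normality $p_*\mathcal{O}_{\tilde{S}}=\mathcal{O}_S$, so a holomorphic germ $f$ on $(S,0)$ is the same datum as a germ of holomorphic function near $E$; requiring the exceptional part of $\operatorname{div}(f\circ p)$ to be exactly $D$ amounts to asking that $f\circ p$ vanish to order precisely $d_i$ along each $E_i$. Thus I would look for a section $s$ of $\mathcal{O}_{\tilde{S}}(-D)$ (defined near $E$) whose zero divisor is $D+F$, where the residual divisor $F$ (the strict transform) is smooth, disjoint from the singular points of $E$, and meets each $E_i$ transversally. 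Pushing $s$ down then produces the sought-for $f$, with strict transform $F$.

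Before constructing $s$ I would record the numerical content of the hypothesis. Since $f\circ p$ is pulled back from $S$ and each $E_i$ is contracted by $p$, the projection formula gives $\operatorname{div}(f\circ p)\cdot E_i=0$, hence $n_i=F\cdot E_i=-D\cdot E_i$. Writing $\delta_i=\sum_{j\neq i}E_i\cdot E_j\geq 0$ for the number of nodes on $E_i$ and applying adjunction $K_{\tilde{S}}\cdot E_i+E_i^2=2g_i-2$, the assumption $(D+E+K_{\tilde{S}})\cdot E_i+2\leq 0$ rewrites as
\[
-D\cdot E_i \;\geq\; 2g_i+\delta_i .
\]
In particular $n_i=-D\cdot E_i>0$, which already accounts for the last sentence of the statement, and the restricted line bundle $L_i:=\mathcal{O}_{\tilde{S}}(-D)|_{E_i}$ has degree at least $2g_i+\delta_i$ on the smooth genus-$g_i$ curve $E_i$.

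The core of the argument is to produce global sections of $\mathcal{O}_{\tilde{S}}(-D)$ restricting to suitably generic sections on every component simultaneously. On a curve of genus $g_i$ a line bundle of degree $\geq 2g_i$ is base-point-free, and degree $\geq 2g_i+\delta_i$ leaves enough freedom to prescribe in addition that the chosen section on $E_i$ not vanish at any of the $\delta_i$ nodes, so that $F$ avoids the singular locus of $E$. I would lift these data from $E$ to a neighbourhood of $E$ in $\tilde{S}$ using a cohomology-vanishing input: peeling off the components of $E$ one at a time via the standard restriction sequences and invoking a Grauert--Riemenschneider / Kawamata--Viehweg type vanishing for the resolution, the numerical bound above is exactly what forces the relevant $H^1$ groups to vanish and hence the restriction maps $H^0(\mathcal{O}_{\tilde{S}}(-D))\to H^0(E_i,L_i)$ to be surjective. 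A Bertini-type genericity argument then selects a single section $s$ whose exceptional vanishing is exactly $D$ (no component lies in the base locus, since each restriction is a nonzero section), whose residual zero locus $F$ is smooth and transverse to each $E_i$ at $n_i>0$ points away from the nodes, and for which $\operatorname{div}(f\circ p)$ is therefore a normal crossing divisor; since $F$ is smooth, its image $p(F)=\{f=0\}$ is smooth away from $0$, so $f$ has an isolated singularity there.

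The step I expect to be the main obstacle is the simultaneous lifting over the reducible, nodal curve $E$: one must keep the prescribed multiplicities $d_i$ along every component while still having enough sections to make the strict transform transverse and node-avoiding on each $E_i$ at once. This is a bookkeeping problem in which the conditions imposed at a node by vanishing along one component interact with the linear system on the adjacent component; the inequality $-D\cdot E_i\geq 2g_i+\delta_i$ is calibrated precisely so that, component by component, there is exactly enough positivity to absorb the $\delta_i$ nodal constraints and the $2g_i$ genus constraints and still retain a base-point-free residual system. Verifying the vanishing theorem in this local (non-projective) setting, and organizing the induction over the components so that all constraints are met simultaneously, is where the real work lies.
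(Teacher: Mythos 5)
The paper itself contains no proof of this Proposition: it is imported verbatim (with a \qed) as Theorem~4.1 of \cite{C-N-P}, so your attempt can only be measured against the proof given there. Your skeleton --- realize $f$ as a section of the ideal sheaf $\mathcal{O}_{\tilde S}(-D)$, prove surjectivity of a restriction map by a vanishing theorem, then run a Bertini-type genericity argument to make the residual divisor $F$ reduced, smooth, transverse and node-avoiding --- is indeed the skeleton of that proof, and your numerical translation of the hypothesis, $-D\cdot E_i\ge 2g_i+\delta_i$ with $\delta_i=\sum_{j\ne i}E_i\cdot E_j$, is correct. But there is a genuine gap at exactly the step you defer as ``the main obstacle'', and the route you indicate for it would fail.

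You propose to peel off the components one at a time and to deduce surjectivity of $H^0(\mathcal{O}_{\tilde S}(-D))\to H^0(E_i,L_i)$ for each $i$ from Grauert--Riemenschneider type vanishing, asserting that the hypothesis is ``exactly what forces the relevant $H^1$ groups to vanish''. It is not. The twist in the component sequence is $\mathcal{O}_{\tilde S}(-D-E_i)$, and GR vanishing would require $-D-E_i-K_{\tilde S}$ to be nef on the exceptional set; for $j\ne i$ its degree on $E_j$ equals $-D\cdot E_j-E_i\cdot E_j-K_{\tilde S}\cdot E_j$, which in the extremal case $-D\cdot E_j=2g_j+\delta_j$ is $E_j^2+2+(\delta_j-E_i\cdot E_j)$, negative as soon as $E_j^2\le -3$ and $E_i$ is the only neighbour of $E_j$. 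So these componentwise vanishings are simply not numerical consequences of the hypothesis. Moreover, componentwise surjectivity is the wrong target anyway: generic sections chosen independently on the $E_i$ need not agree at the nodes, hence define no section over $E$ and nothing to lift. What the hypothesis is actually calibrated for is the \emph{single} restriction sequence to the whole reduced exceptional curve,
\[
0\to \mathcal{O}_{\tilde S}(-D-E)\to \mathcal{O}_{\tilde S}(-D)\to \mathcal{O}_E(-D)\to 0,
\]
because $(-D-E-K_{\tilde S})\cdot E_j=-\bigl((D+E+K_{\tilde S})\cdot E_j+2\bigr)+2\ge 2$ for \emph{every} $j$; thus $-D-E-K_{\tilde S}$ is nef on the exceptional set, GR vanishing (which does hold in this local, Stein setting --- that is not where the difficulty lies) gives $H^1(\mathcal{O}_{\tilde S}(-D-E))=0$, and $H^0(\mathcal{O}_{\tilde S}(-D))\to H^0(E,\mathcal{O}_E(-D))$ is onto. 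One then works on the nodal curve $E$ directly: prescribe nonzero values of the line bundle at the nodes, and use $\deg L_i(-\sum\mathrm{nodes})\ge 2g_i>2g_i-2$ to kill the obstruction $H^1$ on each component, so each $E_i$ carries a section with those prescribed values and only simple zeros away from the nodes; these glue over $E$, lift to $\tilde S$, and then your transversality and isolated-singularity remarks go through. One last small point: $n_i>0$ does not follow from $-D\cdot E_i\ge 2g_i+\delta_i$ alone when $g_i=\delta_i=0$; there you must invoke connectedness of $E$ ($\delta_i=0$ forces $m=1$, and then $-D\cdot E_1=-d_1E_1^2>0$).
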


By \cite[Remark~4.2]{C-N-P}, for any good resolution of the normal
surface singularity $(S, 0)$ there is a effective $D$ which satisfies
the condition of Proposition~\ref{pro-CNP}. Consider now the open book
decomposition determined by a function $f$ provided by
Proposition~\ref{pro-CNP}: let $B=f^{-1}(0)\cap \partial W_C$ and
$\varphi =\frac{f}{|f|}$. As it was explained in \cite{C-N-P}, the
resulting open book decomposition is horizontal, compatible with $\xi
_M$, and with the notation $n_v=-D\cdot E_v$ each $n_v$ is strictly
positive.

Therefore there are horizontal open book decompositions compatible
with the Milnor fillable contact structure $\xi _M$, and indeed, we
can find such open books with the extra condition that $n_v>0$ for all
$v$. A useful simple observation shows that if ${\bf {n}}=(n_v)$
appears as such a vector, then so does $k\cdot {\bf {n}}$ for any
positive integer $k$:

 \begin{lemma}\label{lemma-multiple}
   Let $D$ be an effective divisor which satisfies the conditions of
   Proposition~\ref{pro-CNP}. Then any positive integer multiple
   $k\cdot D$ of $D$ also satisfies those conditions.
 \end{lemma}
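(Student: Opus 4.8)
The plan is to verify the inequality in Proposition~\ref{pro-CNP} directly for $kD$ by exploiting the fact that the inequality, although stated as a single condition, decomposes into a part that scales linearly with $D$ and a part that is independent of $D$. Writing out the condition for $kD$, I would compute
\[
(kD+E+K_{\Sigma})\cdot E_i + 2 = k\,(D\cdot E_i) + (E+K_{\Sigma})\cdot E_i + 2.
\]
The hypothesis on $D$ says that $(D+E+K_{\Sigma})\cdot E_i + 2 \le 0$, i.e. $D\cdot E_i \le -\bigl((E+K_{\Sigma})\cdot E_i + 2\bigr)$. The first step is therefore to understand the sign of each of the two pieces separately.

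The key observation is that the intersection number $D\cdot E_i$ is \emph{negative} for every $i$. This is exactly the statement that $n_v = -D\cdot E_v > 0$, recorded in the discussion preceding the lemma: the construction produces an effective divisor $D$ with $D\cdot E_i < 0$ for all $i$. (This sign is forced by negative definiteness of the intersection form on a resolution together with effectivity of $D$; I would cite the preceding paragraph where $n_v = -D\cdot E_v$ is noted to be strictly positive.) Granting $D\cdot E_i \le -1 < 0$, multiplying by $k \ge 1$ only makes this term more negative, so $k\,(D\cdot E_i) \le D\cdot E_i$.

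With this in hand the proof is a short chain of inequalities. For each $i$,
\[
(kD+E+K_{\Sigma})\cdot E_i + 2 = k\,(D\cdot E_i) + (E+K_{\Sigma})\cdot E_i + 2 \le (D\cdot E_i) + (E+K_{\Sigma})\cdot E_i + 2 = (D+E+K_{\Sigma})\cdot E_i + 2 \le 0,
\]
where the first inequality uses $k\,(D\cdot E_i)\le D\cdot E_i$ (valid because $D\cdot E_i<0$ and $k\ge 1$) and the final inequality is precisely the hypothesis on $D$. Since $kD$ is again a nonzero effective divisor with integer coefficients $kd_i \in \mathbb{N}$, it satisfies all the conditions of Proposition~\ref{pro-CNP}, which is what we wanted to show.

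The only subtle point—and hence the main thing to get right—is the justification that $D\cdot E_i < 0$ for each $i$, since this is what makes the scaling argument go through; without strict negativity, multiplying by $k$ could increase the left-hand side. This strict negativity is guaranteed by the construction of $D$ used just before the lemma (where $n_v = -D\cdot E_v$ is produced strictly positive), so the argument is complete. Everything else is the elementary bilinearity of the intersection pairing.
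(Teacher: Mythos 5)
Your proof is correct, and it takes a genuinely different route from the paper's. Both arguments start from the same bilinear decomposition $(kD+E+K_{\Sigma})\cdot E_i+2=k\,(D\cdot E_i)+(E+K_{\Sigma})\cdot E_i+2$, but you control the sign of the $D$-dependent term, while the paper controls the sign of the $D$-independent one: it multiplies the hypothesis by $k$ and shows via adjunction that $(E+K_{\Sigma})\cdot E_i+2=\sum_{j\neq i}E_j\cdot E_i+2g(E_i)\geq 0$, which gives $(kD+E+K_{\Sigma})\cdot E_i+2\leq k\bigl((D+E+K_{\Sigma})\cdot E_i+2\bigr)\leq 0$. Your route—using $D\cdot E_i=-n_i<0$, as recorded in the paragraph preceding the lemma—is logically sound: that positivity is part of the conclusion of Proposition~\ref{pro-CNP} (together with the standard fact that the divisor of $f\circ p$ has zero intersection with each $E_i$), and applying the proposition to $D$, which satisfies its hypotheses, involves no circularity. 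What the paper's approach buys is self-containment: it uses only the numerical hypothesis, adjunction, $E_j\cdot E_i\geq 0$ and $g(E_i)\geq 0$, whereas you import an analytic existence theorem to certify a sign that already follows from the hypothesis—indeed, the adjunction inequality combined with the hypothesis yields $D\cdot E_i\leq-\bigl((E+K_{\Sigma})\cdot E_i+2\bigr)\leq 0$, which is all your chain of inequalities actually needs.

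Two side remarks in your write-up need correction, though neither breaks the proof. First, the parenthetical claim that $D\cdot E_i<0$ \emph{is forced by negative definiteness of the intersection form together with effectivity of $D$} is false in general: $D=E_1$ is effective and nonzero, yet $D\cdot E_2=E_1\cdot E_2\geq 0$, and it is strictly positive whenever the two curves meet (negative definiteness yields implications in the opposite direction, from $D\cdot E_i\leq 0$ for all $i$ to effectivity). The sign really comes from the hypothesis via adjunction, or from the conclusion of Proposition~\ref{pro-CNP}, not from negative definiteness alone. Second, your closing claim that \emph{strict} negativity is essential is overstated: if $D\cdot E_i=0$ then $k\,(D\cdot E_i)=D\cdot E_i$, so the weak inequality $D\cdot E_i\leq 0$ suffices for the scaling step.
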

 \begin{proof}
   Let $D=\sum d_i E_i$ be an effective divisor satisfying $(D+E+K)E_i + 2
   \le 0$ for all $i=1,\dots ,m$. Let $k$ be a positive integer. Then
   $k((D+E+K)E_i+2) \le 0$ for all $i$. Since $0 \leq \sum_{j \neq i}{E_jE_i}
   + 2g(E_i) = (E-E_i)\cdot E_i + (E_i+K)\cdot E_i+2=
(E+K)E_i + 2 \le k((E+K)E_i+2)$ for all $i$, we have
\begin{equation*}
(k\cdot D+E+K)E_i + 2 \le k((D+E+K)E_i +2) \le 0
\end{equation*}
for all $i$.
 \end{proof}

\section{Horizontal open books  for $\xi _C$}
\label{sec:comp}

Now we turn our attention to constructing horizontal open book decompositions
compatible with the contact structure $\xi _C$.  First of all, following
\cite[Section~4]{G-S} we extend the notion of an open book decomposition for
manifolds with boundary as follows: if $M$ is a compact 3-manifold with
nonempty boundary $\partial M$, then $(B, \varphi )$ is an open book
decomposition if $B\subset M-\partial M$ is an oriented link and $\varphi
\colon M-B \to S^1$ is a map which behaves near $B$ as a usual open book does
and restricts to $\partial M$ as a fibration $\partial M \to S^1$.

Suppose that $Y$ is a plumibing 3-manifold along the plumbing graph
$\Gamma$.  Let $v$ be a fixed vertex of the graph $\Gamma$ and suppose
that $\{ v_1, \ldots , v_k\}$ are the further vertices connected to
$v$ in the graph.  Recall that $e_v$ denotes the framing fixed for
$v$. Let $N_v$, $N_{v_j}$(for $j=1,\ldots ,k$) and $n_v$ be positive
integers satisfying
\begin{equation}\label{eq:condition}
N_v e_v+\sum N_{v_j} =- n_v.
\end{equation}
In short, if $I$ denotes the intersection matrix of $\Gamma$ (with the
$e_v$'s in the diagonal), then ${\bf {N}}\cdot I =-{\bf {n}}$, where
${\bf {N}}=(N_v)$ and ${\bf {n}}=(n_v)$.

Let $D^2$ be a 2-disk containing disjoint small disks $D_1, \ldots
,D_k$ and let $A=D^2-\cup _{i=1}^k {\rm {int }} D_i$. Consider $M=A
\times S^1$ with coordinates $\beta \in S^1$ and $\gamma_j \in
\partial D_j$ and $\alpha \in \partial D^2$ ($\alpha$ and $\gamma_j$ with
orientation as boundary of $D^2$ and $D_j$ respectively). Now an adaptation of
\cite[Lemma~4.1]{G-S} gives the following result.
\begin{lemma}\label{lemma-HOB}
  There exists an open book decomposition $(B, \varphi )$ on $M=A\times S^1$
  such that the following conditions hold:
\begin{enumerate}
\item[(1)]$\varphi|_{\partial D^2 \times S^1} =-e_vN_v\alpha+ N_v\beta$.
\item[(2)]$\varphi|_{\partial D_j\times S^1} =N_{v_j}\gamma_j + N_v\beta$.
\item[(3)]The pages $\varphi ^{-1}(\theta)$ are transverse to $\partial_\beta$.
\item[(4)]The binding $B$ is tangent to $\partial_\beta$.
\item[(5)]$B$ has $n_v$ components $B_1,\ldots ,B_{n_v}$.
\item[(6)]When the pages are oriented so that $\partial _\beta$ is
  positively transverse, then $B_1,\ldots ,B_{n_v}$ are oriented in the
  positive $\partial _\beta$ direction.
\end{enumerate}
\end{lemma}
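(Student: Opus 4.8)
The plan is to build $\varphi$ by separating off the $\beta$-direction. Since every prescribed boundary restriction in (1) and (2) has the same coefficient $N_v$ of $\beta$, I would look for $\varphi$ in the form $\varphi = N_v\beta + g$, where $g\colon A\setminus\{p_1,\dots,p_{n_v}\}\to S^1$ is an $S^1$-valued function on the planar surface $A$ and $p_1,\dots,p_{n_v}$ are $n_v$ chosen interior points. The binding will then be $B=\bigcup_i \{p_i\}\times S^1$, which is manifestly tangent to $\partial_\beta$ and has $n_v$ components, giving (4) and (5). The decisive feature of this ansatz is that $\partial_\beta\varphi\equiv N_v>0$, so $d\varphi$ never vanishes on $M\setminus B$; hence $\varphi$ is automatically a submersion, $\partial_\beta$ is everywhere positively transverse to the level sets, and (3) and (6) follow once $g$ is in place. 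In particular no genericity or Morse-theoretic argument on $g$ is needed, since the $\beta$-term alone guarantees that $\varphi$ is a fibration.

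Next I would fix the behaviour of $g$ near the $p_i$ and on $\partial A$. Near $p_i$, in local polar coordinates $(r_i,\theta_i)$, I take $g=\theta_i$, so that $\varphi=N_v\beta+\theta_i$ there; a direct check shows this is the standard horizontal binding model — the pages spiral into $\{r_i=0\}=\{p_i\}\times S^1$ while remaining transverse to $\partial_\beta$, and the boundary orientation induced on $B$ points in the $+\partial_\beta$ direction, matching (6). On the boundary I require $g|_{\partial D^2}=-e_vN_v\alpha$ and $g|_{\partial D_j}=N_{v_j}\gamma_j$, which then yield (1) and (2). The existence of such a $g$ is a pure obstruction-theory question: an $S^1$-valued function on $A$ with the prescribed winding numbers on each boundary component and with $n_v$ interior zeros of local degree $+1$ exists if and only if these winding numbers sum to zero when taken with consistent orientations. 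Equivalently, I would realize $g$ as $\arg F$ for a complex function $F$ on $A$ with simple zeros at the $p_i$ and the prescribed boundary arguments, interpolating from the explicit local models.

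The crux is this degree count, and it is exactly where hypothesis \eqref{eq:condition} enters. Applying Stokes' theorem to the closed form $g^*d\theta$ on $A$ with small disks around the $p_i$ removed, the outer winding must equal the sum of the inner windings plus the $n_v$ interior contributions; written out with the convention that $\gamma_j$ is oriented as $\partial D_j$ (opposite to its orientation as part of $\partial A$), this reads $-e_vN_v=\sum_j N_{v_j}+n_v$, which is precisely $N_ve_v+\sum_j N_{v_j}=-n_v$. The main obstacle I anticipate is therefore not analytic but a matter of bookkeeping: getting every orientation sign right, so that the three sources of winding — the outer boundary, the $k$ inner boundaries, and the $n_v$ binding points — assemble into the single relation \eqref{eq:condition}, and confirming that with all interior windings taken to be $+1$ the resulting open book satisfies the orientation clause (6) rather than its reverse. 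Once the signs are pinned down the existence of $g$ is automatic and the verification of (1)–(6) is immediate from the ansatz $\varphi=N_v\beta+g$. Since this is an adaptation of \cite[Lemma~4.1]{G-S}, I would also transcribe the local gluing near $B$ from there to be sure the pages are genuine Seifert surfaces.
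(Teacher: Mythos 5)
Your proposal is correct, and it reaches essentially the same open book as the paper (binding the vertical circles $\{p_i\}\times S^1$ over $n_v$ interior points, pages with constant $\beta$-winding $N_v$), but by a genuinely different construction. The paper builds $\varphi$ by cut-and-paste: it places the $p_i$ and the disks $D_j$ on a segment, fattens them to pairwise tangent disks $D'_j$, puts on each annular piece $(D'_j-D_j)\times S^1$ an explicit foliation by annuli of slope $(N_v,-N_{v_j})$, on each disk piece the open book with binding $\{p_{j-k}\}\times S^1$ and boundary slope $(N_v,-1)$, glues across the tangencies, and only at the end invokes Equation~\eqref{eq:condition} to identify the resulting outer slope $x=-n_v-\sum N_u$ with $N_ve_v$, giving property (1). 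You instead make the single global ansatz $\varphi=N_v\beta+g$ and convert the whole construction into one obstruction-theoretic statement: $g$ exists with the prescribed boundary windings and $n_v$ interior points of local degree $+1$ if and only if the windings balance, and your Stokes computation shows this balance \emph{is} Equation~\eqref{eq:condition}. What your route buys: properties (3), (4) and local triviality are automatic, since $d\varphi(\partial_\beta)=N_v>0$ means the flow of $\partial_\beta/N_v$ carries fibers to fibers, so $\varphi$ is a fibration with no piece-by-piece inspection; what the paper's route buys is completely explicit pages (unions of annuli), which makes its ``routine'' verifications visible by eye. The two checks you defer do work out: the local model $\varphi=N_v\beta+\theta_i$ is a legitimate open book near the binding, since it differs from the standard model $\theta_i$ by the solid-torus shear $(r,\theta_i,\beta)\mapsto(r,\theta_i-N_v\beta,\beta)$, well defined exactly because $N_v\in\mathbb{Z}$; and with local degree $+1$ the page orientation determined by positive transversality of $\partial_\beta$ restricts near $r=0$ to $d\beta\wedge dr$, whose induced boundary orientation on $\{p_i\}\times S^1$ is $+\partial_\beta$ (note that in dimension $3$ the ``transversal first'' and ``transversal last'' conventions agree), so (6) holds as stated rather than its reverse --- had the correct sign been $-1$, your degree count would have produced $-e_vN_v=\sum_j N_{v_j}-n_v$, which is not \eqref{eq:condition}, so this sign check is indeed the one non-negotiable point of your argument.
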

 \begin{proof}
   Let $p_1, \ldots , p_{n_v}$ be fixed points in $A$. We may assume that the
   centers of the disks $D_1,\ldots ,D_k$ and the fixed points $p_1,\ldots
   ,p_{n_v}$ lie on a line segment and that each $D_j$ and $p_i$ are contained
   in the interior of another disk $D'_j$ for $1 \leq j \leq k+n_v$ such that
   the disks $D'_j$ and $D'_{j+1}$ are tangent to each other at one point and
   the center of $D'_j$ is equal either to the center of $D_j$ or to
   $p_{j-k}$; see Figure~\ref{fig1}.
\begin{figure}
\includegraphics{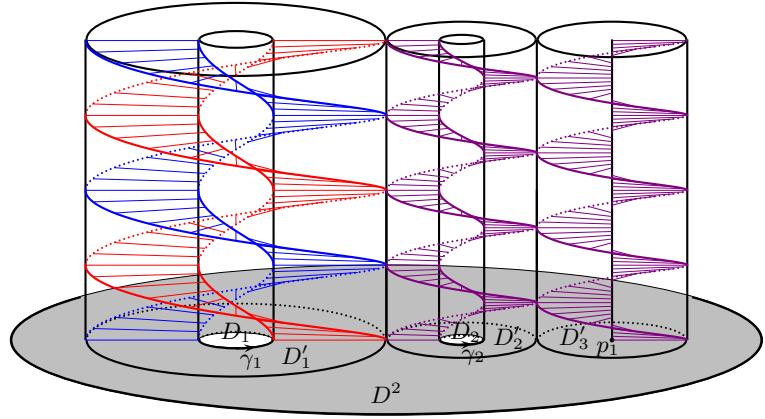}
 \caption{The diagram shows the special case when the vertex $v$ has
   two neighbours $v_1, v_2$, furthermore $N_v =4, N_{v_1}=2,
   N_{v_2}=1$, and the given value of $ n_v$ is equal to $1$.}
 \label{fig1}
 \end{figure}
The desired open book decomposition on $M=A\times S^1$ will be built
from pieces.

First we consider an index $j$ between $1$ and $k$ and describe the open book
decomposition on $(D_j'-D_j)\times S^1$ over the annulus $D_j'-D_j$. Each such
index $j$ corresponds to an other vertex $u$ of the plumbing graph $\Gamma$
with the property that $v$ and $u$ are joined by an edge. Now consider the
curve $(N_v, -N_u)$ on $\partial ((D_j'-D_j)\times S^1)$.  Notice that the
boundary $\partial ((D_j'-D_j)\times S^1)$ has two components, an inner and an
outer one. In addition, the positive integers $N_v$ and $N_u$ are not
necessarily relatively prime, therefore the resulting curve on one of the
boundary components is not necessarily connected.  Foliate the boundaries by
these curves, and extend these foliations to a foliation of $(D_j'-D_j)\times
S^1$ by (possibly disjoint unions of) annuli. cf. the left hand portion of
Figure~\ref{fig1}.

Consider now an index $j$ between $k+1$ and $k+n_v$. The disk $D_j'$ then
contains $p_{j-k}$.  The open book decomposition on $D_j'\times S^1$ will have
binding equal to $\{ p_{j-k}\} \times S^1$ and the pages provide a foliation
of $\partial (D_j'\times S^1)$ by curves of type $(N_v, -1)$. (These
conditions uniquely determine the open book decomposition.) For an
illustrative example, see the right hand portion of Figure~\ref{fig1}.

The union of the above pieces now provide an open book on
$(\cup D_j')\times S^1$. After trivial smoothings over the
tangencies of the consecutive disks, this construction then
extends to an open book decomposition on $A\times S^1$.

The proofs of the properties are routine exercises inside the individual
pieces. There is one property which needs to be commented: we need to compute
the slope of the fibration given by the pages on the outer boundary component
$\partial D^2 \times S^1$. By construction, the curves we get by intersecting
the boundary with the pages are of type $(N_v, x)$, where $x$ is the sum of
the corresponding coordiantes on the individual pieces, cf. Figure~\ref{fig2}.
\begin{figure}
\includegraphics{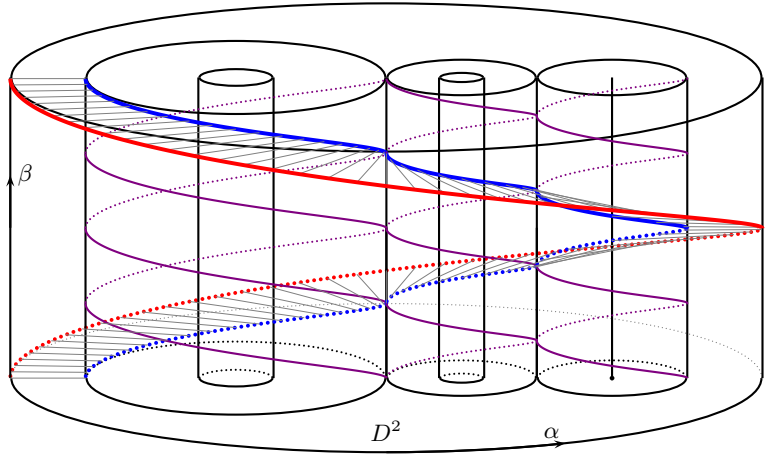}
\caption{Computation of the slope of the curves on boundary of the union
  $(\cup D_j')\times S^1$. The diagram depicts part of the page we get
by gluing the pages of the individual open book decompositions together.}
 \label{fig2}
 \end{figure}
By our choices, we get that $x=-n_v -\sum N_{u}
$ where the summation goes for those vertices $u$ which are connected
in the graph $\Gamma$ to $v$.  By our choice of the vector ${\bf {N}}$
(given in Equation~\eqref{eq:condition}) we get that $x=N_v e_v$, verifying
the first claimed property as well.
\end{proof}

According to a standard result (cf.  \cite[Corollary~3.4]{G-S}, for example),
for a negative definite symmetric matrix $I$ with nonnegative off-diagonals,
and for any $\mathbf{n} \in (\mathbb{R}^+)^m$ the vector $-\mathbf{n}\cdot
I^{-1}$ is in $(\mathbb{R}^+)^m$.

\begin{theorem}\label{theorem-HOB}
  Suppose that $Y=Y_{\Gamma}$ is a given plumbing manifold, where
  $\Gamma$ is a negative definite plumbing graph on $m$
  vertices. Suppose that ${\bf {n}}=(n_1, \ldots , n_m)$ is a given
  vector in ${\mathbb {N}} ^m$. Then there is $k\in {\mathbb {N}}$ and
  a horizontal open book decomposition $(B, \varphi )$ on $Y$ which is
  compatible with $\xi _C$ and at each vertex $v$ it has $kn_v$
  binding components.
\end{theorem}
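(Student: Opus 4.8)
The plan is to reduce the global statement to the local model of Lemma~\ref{lemma-HOB}, one piece per vertex, choosing the slope data so that the pieces match across the plumbing tori. The first step is purely linear-algebraic. At a vertex $v$ the relation \eqref{eq:condition} is exactly the $v$-th coordinate of the matrix equation $\mathbf{N}\cdot I=-\mathbf{n}$, so I want a vector $\mathbf{N}=(N_v)$ of positive integers with $\mathbf{N}\cdot I=-k\mathbf{n}$ for a suitable $k$, as then the binding count at $v$ will be $kn_v$. Since $\Gamma$ is negative definite with nonnegative off-diagonal entries, the fact recorded just before the theorem (cf. \cite[Corollary~3.4]{G-S}) gives $-\mathbf{n}\cdot I^{-1}\in(\mathbb{R}^+)^m$; as $I$ is an integer matrix this vector actually lies in $(\mathbb{Q}^+)^m$. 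Taking $k$ to be a common denominator of its entries, I set $\mathbf{N}=-k\,\mathbf{n}\cdot I^{-1}$, a vector of positive integers with $\mathbf{N}\cdot I=-k\mathbf{n}$. This is the integer $k$ of the statement.

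With these integers fixed, I would apply Lemma~\ref{lemma-HOB}, together with its evident higher-genus analogue (extending the same boundary fibration over the surface $\Sigma_{g_v}$ with one disk removed for each edge at $v$ and one further disk), at every vertex $v$, feeding in $N_v$, the values $N_{v_j}$ attached to the neighbours of $v$, and the binding count $kn_v$. This yields, over each vertex, a horizontal open book on the local piece $M_v$ whose page--boundary slopes are prescribed on the inner tori (one per edge at $v$) and on the single outer torus by conditions (1) and (2). The manifold $Y_\Gamma$ is then reassembled by gluing, for every edge $(v,u)$, the corresponding inner torus of $M_v$ to that of $M_u$ through the plumbing map interchanging base and fibre, and by capping each leftover outer torus with a solid torus realising the Euler number $e_v$.

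The crux, and the step I expect to be the main obstacle, is to verify that these local open books assemble into a genuine open book on $Y_\Gamma$. Along an edge $(v,u)$ the page--boundary foliations must agree under the plumbing identification: the slope $(N_v,-N_u)$ coming from $M_v$ and the slope $(N_u,-N_v)$ coming from $M_u$ are interchanged, up to the orientation reversal built into the gluing, exactly because the base and fibre coordinates are swapped, and the numbers of boundary curves, both equal to $\gcd(N_v,N_u)$, also agree. Extending the open book over the Euler-number caps is precisely where \eqref{eq:condition} enters: the slope computation in the proof of Lemma~\ref{lemma-HOB} forces the outer page slope to be $(N_v,e_vN_v)$, a multiple of the meridian of the capping solid torus, so the pages close off with disks and create no new binding. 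This is the only point at which the global combinatorics of $\Gamma$, encoded in $\mathbf{N}\cdot I=-k\mathbf{n}$, has to be reconciled with the purely local models.

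Granting the gluing, horizontality of the resulting $(B,\varphi)$ is immediate from conditions (3)--(6): the binding is a union of fibres $\{p_i\}\times S^1$, the pages are transverse to the fibrations, and the orientations agree; and by condition (5) the binding has $kn_v$ components over each vertex $v$. Compatibility with $\xi_C$ is a local matter: each piece $M_v$ carries the local model of $\xi_C$ coming from the $\omega$-convex neighbourhood of \cite{G-S}, and Lemma~\ref{lemma-HOB} produces, following \cite[Lemma~4.1]{G-S}, an open book compatible with it; since compatibility is expressed through the tangency and transversality conditions (3), (4) and (6), which survive the gluing, the assembled open book is compatible with $\xi_C$ on all of $Y_\Gamma$. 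Together with Proposition~\ref{pro-CNP} and Theorem~\ref{thm:equiv} this produces horizontal open books with equal, everywhere-positive binding vectors for $\xi_C$ and $\xi_M$, yielding the contactomorphism that drives Theorem~\ref{thm:main}.
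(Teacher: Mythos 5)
Your overall strategy is the paper's: clear denominators in $\mathbf{N}\cdot I=-\mathbf{n}$ using the positivity fact quoted from \cite{G-S}, build local horizontal open books from Lemma~\ref{lemma-HOB}, and match slopes across the plumbing tori via the swap $\gamma_{i_1}\mapsto\beta_{i_2}$, $\beta_{i_1}\mapsto\gamma_{i_2}$, which by Property~(2) identifies page boundaries up to the orientation reversal. Where you genuinely diverge is in how the genus and the Euler number $e_v$ are handled. You invoke an ``evident higher-genus analogue'' of Lemma~\ref{lemma-HOB} on $\Sigma_{g_v}$ with $k_v+1$ disks removed, times $S^1$, and then realize $e_v$ by capping the leftover torus with a solid torus, checking (correctly) that Property~(1) makes the total page slope there $N_v(\alpha+e_v\beta)$, i.e.\ $N_v$ copies of the meridian image, so the fibration pulls back to the cap as $(z,w)\mapsto w^{N_v}$ and extends with no new binding. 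The paper instead never needs a higher-genus local lemma: it splits the circle bundle over $C_v$ as $A_v\times S^1\cup\widehat{C}_v\times S^1$, applies the planar Lemma~\ref{lemma-HOB} only to $A_v\times S^1$ (all binding lives there), puts the binding-free fibration $\pi=N_vl_v$ on the genus piece $\widehat{C}_v\times S^1$, and absorbs $e_v$ into the twisted gluing \eqref{eq:gluing}; Property~(1) is designed exactly so that the two fibrations agree across that gluing.

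This difference is also where your one real soft spot sits: Lemma~\ref{lemma-HOB} is stated and proved only for a planar surface $A$ (its proof arranges disks along a line segment inside $D^2$ and foliates annuli), so the higher-genus version you rely on is not available off the shelf, and proving it is precisely the content your proposal defers. It is true, and the quickest proof is the paper's own maneuver: write $\Sigma_{g_v,k_v+1}=A''\cup\Sigma_{g_v,1}$ with $A''$ planar, run the construction of Lemma~\ref{lemma-HOB} on $A''\times S^1$ treating the extra boundary circle as an edge of weight $0$ (so $\varphi$ restricts there as $N_v\beta$ and the pages meet that torus in $N_v$ curves parallel to the surface direction), and glue in $\Sigma_{g_v,1}\times S^1$ carrying the fibration $N_v l$, whose pages are $N_v$ parallel copies of $\Sigma_{g_v,1}$. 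Once that construction (or, equivalently, the paper's $A_v$/$\widehat{C}_v$ decomposition) is supplied, your capping argument yields a complete and correct proof; the remaining steps --- horizontality from conditions (3)--(6) and compatibility with $\xi_C$ via the Reeb vector field description of \cite[Proposition~4.2]{G-S} --- coincide with the paper's.
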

\begin{proof}
  Let $\Gamma $ be the given negative definite plumbing graph with
  vertices corresponding to the surfaces $C_v$ ($v=1, \ldots , m$)
  with self-intersection numbers $C_v ^2=e_v$ and genera
  $g(C_v)=g_v$. The graph defines a 4-manifold (containing the
  surfaces $C_v$) and $Y$ is the boundary of this plumbing 4-manifold.

For the given positive integral vector ${\mathbf {n}}$ consider $
\mathbf{N}\in {\mathbb {Q}}^m $ satisfying the relation given by
Equation~\eqref{eq:condition} ${\mathbf {N}} \cdot I = -\mathbf{n}$,
where (as before) $I$ is a intersection matrix of the plumbing
graph. The observation preceding Theorem~\ref{theorem-HOB} implies
that $N_v$ is nonnegative for all $v$.  Notice that \emph{a priori}
each $N_v$ is a rational number, but after multiplying both sides of
Equation~\eqref{eq:condition} by an appropriate positive integer $k$,
we may assume that the resulting vector (which we still denote by
${\mathbf {N}}$) is integral.

For constructing a horizontal open book decomposition on $Y$ with the
required properties, we decompose the neighborhood of the union
$C=\cup C _v$ into fibered pieces: For the vertex $v$ of the graph
$\Gamma$ we consider the $S^1$-bundle over $C_v$ with Euler number
$e_v$.  Let $\widehat{C}_v$ denote the punctured surface $C_v-
D^2$. Suppose that $\partial \widehat{C} _v \times S^1$ has
coordinates such that $\partial \widehat{C}_v\times \{ 1\}=m_v$, $\{
p\}\times S^1=l_v$, and similarly $\partial D^2\times S^1$ has
coordinates $\partial D^2\times \{ 1\}=\alpha_v$ and $\{ p'\}\times
S^1=\beta _v$. We orient $\alpha_v$ by the boundary orientation of $D^2$
and $m_v$ with the orientation opposite to the boundary orientation of
$\widehat{C}_{v}$. The $S^1$-bundle over $C_v$ with
Euler number $e_v$ is given by gluing $D^2 \times S^1$ and
$\widehat{C}_{v} \times S^1$ with the gluing map given by
\begin{equation}\label{eq:gluing}
\alpha_v + e_v \beta _v \rightarrow m_v \qquad {\rm {and}} \qquad
 \beta_v \rightarrow l_v.
\end{equation}
Assume that each $C_v$ meets the union $\cup _{u\neq v}C_u$ in $k_v$ points.
For each such intersection point (that is, for each index $i$ between $1$ and
$k_v$) choose a small disk $D_{i}\subset D^2$. Let $A_v=D^2-{\rm {int }}
(D_{1}\cup \ldots \cup D_{k_v} ) \subset C_{v}$ be the complement of (the
interiors) of the chosen disks. Near $C_v$ therefore we can decompose the
3-manifold as the union of $A_v \times S^1$ and
$\widehat{C}_{v} \times S^1$.

On $A_v\times S^1$ we take the horizontal open book decomposition
provided by Lemma~\ref{lemma-HOB}.  On $\widehat{C}_{v} \times S^1$ we
consider the horizontal open book decomposition given by the map
$\pi:\widehat{C}_v \times S^1 \rightarrow S^1$ defined as $\pi=N_i
l_i$.

By Property~(1) of Lemma~\ref{lemma-HOB}, when we glue $A_v\times S^1$
and $\widehat{C}_{v} \times S^1$ via the gluing map specified by
Equation~\eqref{eq:gluing}, the open book decompositions also glue
together. Therefore we have a horizontal open book decomposition near
the surface $C_v$.

Let $q$ be an intersection point of $C_v$ and $C_u$ with $q \in D_{i_1}
\subset C_v$ and $q \in D_{i_2} \subset C_u$. When we plumb the two bundles at
$q$, we glue the circle bundles with the map $\gamma _{i_1} \rightarrow
\beta_{i_2}$ and $\beta_{i_1} \rightarrow \gamma _{i_2} $ (where the curves
$\gamma _j$ are as in Lemma~\ref{lemma-HOB}). Thus $N_{i_2}\gamma _{i_1} -
N_{i_1}\beta_{i_1}$ maps to $-(N_{i_1}\gamma _{i_2} - N_{i_2}\beta_{i_2})$. By
Property~(2) of Lemma~\ref{lemma-HOB} the curve $N_{i_2}\gamma _{i_1} -
N_{i_1}\beta_{i_1}$ is part of the boundary of a page of the open book
decomposition on $A_v\times S^1$.  After plumbing, the pages of the open book
decomposition are obtained by gluing pages of the open book decompositions on
$A_v\times S^1$ and $A_u\times S^1$ along $N_{i_2}\gamma _{i_1} -
N_{i_1}\beta_{i_1} \subset \partial (A_v\times S^1)$ and $N_{i_1}\gamma _{i_2}
- N_{i_2}\beta_{i_2} \subset \partial (A_u\times S^1)$.  In conclusion, the
pages of the individual open book decompositions glue together when performing
the plumbing operation.  This procedure therefore results a horizontal open
book decomposition of $Y=Y_{\Gamma}$ with binding number $\mathbf{n}=(n_v)$.

  Finally we need to check that this open book decomposition is
  compatible with the contact structure $\xi _C$.  More precisely, we
  have to check that the Reeb vector field for a contact form
  representing $\xi_{C}$ is transverse to the pages and tangent to the
  binding components.  Note that Reeb vector field can be chosen to be
  a positive multiple of $\partial_{\theta}$ on $f_v^{-1}(t)$
  (cf. \cite[Proposition~4.2]{G-S}), and a positive multiple of $b_1
  \partial_{q_1} + b_2\partial_{q_2}$ for some $b_1,b_2>0$ on
  $f_e^{-1}(t)$. (Here we follow the notations in \cite[Proposition~4.2]{G-S}
  for $f_v^{-1}(t)$, $f_e^{-1}(t)$, $q_1, q_2$. In the above proof $ \{q_1,
  q_2\}=\{ \gamma _j, \beta_j \}$ on the neighborhood $D_j$.) Therefore the
  open book decomposition is horizontal, concluding the proof.
\end{proof}
As a corollary of the arguments given above, now we can show that
the two contact structures $\xi _M$ and $\xi _C$ are contactomorphic.

\begin{corollary}\label{cor:xiM=xiC}
Suppose that $C\subset (X, \omega )$ is a configuration of symplectic
2-manifolds as before, with $\omega$-convex neighbourhood $U_C$ and
induced contact structure $\xi _C$ on $\partial U_C$.  Let $\xi _M$ be
the Milnor fillable contact structure on the link of a singularity
with resolution graph $\Gamma _C$.  Then $\xi _M$ and $\xi _C$
are contactomorphic.
\end{corollary}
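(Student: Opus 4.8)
The plan is to combine the two families of horizontal open book decompositions that have been constructed in the preceding sections, and then invoke the uniqueness result of Caubel--N\'emethi--Popescu-Pampu (Theorem~\ref{thm:equiv}) to conclude that the contact structures they support coincide up to contactomorphism. Concretely, both $\partial U_C$ and the link $\partial W_C$ are, up to orientation-preserving diffeomorphism, the same plumbing 3-manifold $Y=Y_{\Gamma _C}$, since the plumbing graph $\Gamma _C$ agrees with the resolution graph of the singularity by hypothesis. So it suffices to exhibit two horizontal open book decompositions of $Y$ --- one compatible with $\xi _M$ and one compatible with $\xi _C$ --- whose binding vectors $\mathbf{n}$ agree componentwise with all entries strictly positive.

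First I would produce the Milnor side. By Proposition~\ref{pro-CNP} together with \cite[Remark~4.2]{C-N-P}, there is an effective divisor $D$ satisfying the numerical condition, and the associated function $f$ yields a horizontal open book decomposition compatible with $\xi _M$ whose binding vector is $\mathbf{n}_M=(n_v)$ with $n_v=-D\cdot E_v>0$ for every vertex $v$. Call this vector $\mathbf{n}_M$.

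Next I would produce the $\xi _C$ side with a matching binding vector. Here the key point is the flexibility afforded by Theorem~\ref{theorem-HOB}: for \emph{any} prescribed positive integral vector we obtain a horizontal open book decomposition compatible with $\xi _C$, at the cost of passing to an integer multiple $k$ of that vector. Applying Theorem~\ref{theorem-HOB} to $\mathbf{n}_M$ gives a horizontal open book compatible with $\xi _C$ with binding vector $k\cdot \mathbf{n}_M$ for some positive integer $k$. To match this on the Milnor side, I would replace $D$ by $k\cdot D$; by Lemma~\ref{lemma-multiple} the divisor $k\cdot D$ still satisfies the condition of Proposition~\ref{pro-CNP}, and its binding vector is exactly $k\cdot \mathbf{n}_M$ (since $n_v=-D\cdot E_v$ scales linearly in $D$). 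Thus both $\xi _M$ and $\xi _C$ admit horizontal open book decompositions on $Y$ with the identical, everywhere-positive binding vector $k\cdot \mathbf{n}_M$.

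With the two binding vectors equal and strictly positive at every vertex, Theorem~\ref{thm:equiv} applies verbatim: the two horizontal open book decompositions are equivalent, hence their compatible contact structures are contactomorphic. Since $\xi _M$ is compatible with the first and $\xi _C$ with the second, we conclude $\xi _M$ and $\xi _C$ are contactomorphic, as claimed. The only subtlety I would watch carefully is the orientation bookkeeping: Theorem~\ref{thm:equiv} requires the diffeomorphism identifying $\partial U_C$ with the link to be orientation preserving and the open books to be horizontal in the matching sense, and property~(6) of Lemma~\ref{lemma-HOB} together with the orientation statement in Proposition~\ref{pro-CNP} is precisely what guarantees the binding orientations agree with the fiber orientations on both sides. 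I expect this orientation compatibility to be the main (though routine) obstacle; once it is in place, the matching of the binding vectors via the scaling trick of Lemma~\ref{lemma-multiple} makes the application of Theorem~\ref{thm:equiv} immediate.
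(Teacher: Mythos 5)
Your proposal is correct and follows essentially the same route as the paper: construct the Milnor-side horizontal open book from a divisor $D$ via Proposition~\ref{pro-CNP}, apply Theorem~\ref{theorem-HOB} to its binding vector to get a $\xi_C$-compatible open book with bindings $k\mathbf{n}$, scale the divisor to $k\cdot D$ via Lemma~\ref{lemma-multiple} to match, and conclude with Theorem~\ref{thm:equiv}. The only cosmetic difference is that the paper explicitly defines $\mathbf{N}$ by $\mathbf{N}\cdot I=-\mathbf{n}$ and picks $k$ to clear denominators before invoking Theorem~\ref{theorem-HOB}, whereas you let the theorem supply $k$ and then scale $D$ afterwards; these are logically identical.
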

\begin{proof}
Let $\{ E_v \}$ denote the irreducible components of the exceptional
curve in the resolution. These curves correspond to the vertices of
the resolution graph $\Gamma _C$.  Let $D=\sum d_i E_i$ be an
effective divisor satisfying the assumptions of by
Proposition~\ref{pro-CNP}.  (By \cite[Remark~4.1]{C-N-P} such $D$
always exists.) As it is verified by Proposition~\ref{pro-CNP}, the
existence of $D$ shows that there is a horizontal open book
decomposition on $Y_{\Gamma _C}$ compatible with $\xi _M$ which has
$n_v=-D\cdot E_v>0 $ binding component at the vertex $v$.

Define the vector ${\bf {N}}=(N_v)$ of positive rational numbers by the
identity $\mathbf{N} \cdot I= -\mathbf{n}$, where $I$ is the intersection
matrix of plumbing graph $\Gamma _C$ and $\mathbf{N}=(N_v)$,
$\mathbf{n}=(n_v)$.  Suppose that with the choice $k\in {\mathbb {N}}$ the
products $k\cdot N_v$ are integers for all $v$. Consider the horizontal open
book decomposition corresponding to the divisor $k\cdot D$. (As
Lemma~\ref{lemma-multiple} shows, this divisor also satisfies the assumptions
of Proposition~\ref{pro-CNP}.) This procedure provides a horizontal open book
decomposition of $Y_{\Gamma _C}$ which is compatible with $\xi _M$ and has
$kn_v>0$ binding components at each vertex $v$ of $\Gamma _C$.

Now apply Theorem~\ref{theorem-HOB} with the choice $ {\mathbf {n}}$ and $k$
as above. As a result, we get a horizontal open book decompositions compatible
with $\xi _C$ having $kn_v$ binding components at each vertex $v$. Therefore
the two contact structures $\xi _M$ and $\xi _C$ are compatible with
horizontal open book decompositions with equal (and positive) binings, hence
by Theorem~\ref{thm:equiv} the structures are contactomorphic, concluding the
proof.
\end{proof}

With these results at hand, now we can turn to the proof of the main result of
the paper:

\begin{proof}[Proof of Theorem~\ref{thm:main}]
Let $C=(C_1, \ldots , C_m)$ be the given set of symplectic surfaces in
$(X, \omega )$, and $W_C$ a smoothing of a singularity with resolution
graph $\Gamma _C$ given by the configuration $C$. Let $U_C$ be an
$\omega$-convex neighbourhood of $C$ in $X$ (the existence of which is
proved in \cite[Theorem~1.2]{G-S}). According to
Corollary~\ref{cor:xiM=xiC} the contact structure $\xi _C$ induced on
$\partial U_C$ is contactomorphic to the Milnor fillable contact
structure $\xi _M$ on $\partial W_C$, hence by the symplectic gluing
theorem described in \cite{Etn} (see also \cite[Theorem~7.1.9]{OS}),
the manifold $X_C=(X-{\rm {int }} U_C)\cup _{\phi} W_C$ admits a
symplectic structure $\omega _C$ which on $X- U_C$ conicides with the
given symplectic structure $\omega$. The proof is complete.
\end{proof}

\section{An example}
\label{sec:example}

In this section we show an example of a family of singularities with
resolution involving high genus curves, and for which the topological
data of smoothings can be computed.  We will perform the symplectic
surgery on symplectic 4-manifolds using these singularities and the
smoothings.

\subsection*{The singularity}
Let $s$, $t$ and $N$ be positive integers such that $N-1$ divides
$s+t$ and $\gcd(N-1,t)=1$. Consider the hypersurface singularity
$(S,0)=(S_{s,t,N},0)$ given by the equation
\begin{equation}\label{eq:sing}
(x^s+y^s)(x^t+y^{Nt})+z^{N-1}=0.
\end{equation}
Repeatedly blow up the singular curve $(x^s+y^s)(x^t+y^{Nt})=0$ and then
considering the ramified $(N-1)$-fold cover. After normalization and
desingularization, and finally blowing down the rational
$(-1)$-curves, we get the minimal resolution of the singularity $(S,0)$ with
the following properties (cf. \cite{Nem, Nem2}): The resolution consists of
the union of two curves $A$ and $B$, intersecting each other transversally
once, $A^2= -N$ and $g(A)= (s-1)(N-2)/2$ while $B^2=-1$ and
$g(B)=(t-1)(N-2)/2$.

Specializing to $s=3$ and $t=30N-33$, the curve  $A$ is of genus $N-2$
and B is of genus $15N^2-47N+34$.  The condition $\gcd(N-1,t)=1$ is
satisfied when $N-1$ is not divisible by $3$.

\subsection*{Topological data of the smoothing}
We start with a short generic discussion about the computation of
topological data of the Milnor fiber of a hypersurface singularity.
Suppose therefore that $f:(\mathbb{C}^3,0) \to (\mathbb{C},0)$ defines
the isolated singularity $(S,0)$ and $ p:({\tilde {S}},E) \to
(S,0)=(f^{-1}(0),0)$ is its minimal good resolution. We write the
exceptional divisor $p^{-1}(0)=E$ as the union of irreducible
components: $E=E_1\cup \cdots \cup E_m$. Let $h={\rm {rank }} H_1(E)$ and
$p_g=\dim_{\mathbb{C}}H^1({\tilde {S}} ,\mathcal{O}_{{\tilde {S}}})$.
The canonical class $K$ of ${\tilde {S}}$ can be written as $\sum
r_iE_i$, where the $r_i$ are rational numbers, determined by
adjunction formula $2g(E_i)-2=E_i^2 + K \cdot E_i$.  The Milnor number
and the signature of the Milnor fiber of the singularity of
$f^{-1}(0)$ can be computed as follows:
\begin{proposition}[\cite{Dur}]
  The Milnor number $\mu=dim_{\mathbb{C}}\mathbb{C}\{x, y, z\}/(\frac{\partial
    f}{\partial x} ,\frac{\partial f}{\partial y}, \frac{\partial f}{\partial
    z})$ is equal to $\mu=K^2-h+m+12p_g$. The signature of the Milnor fiber
  is equal to $\sigma=-\frac{1}{3}(2\mu+K^2+m+2h)$. \qed
 \end{proposition}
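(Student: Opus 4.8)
The plan is to prove both formulas simultaneously by a single \emph{global degeneration} argument, reducing the local analytic invariants to the Hirzebruch signature theorem and Noether's formula applied to closed projective surfaces. Since $(S,0)$ is a hypersurface singularity, I can realize it globally: choose a one-parameter flat family $\{V_t\}$ of projective surfaces whose special member $V_0$ has a single singular point analytically isomorphic to $(S,0)$ and whose general member $V_t$ ($t\neq 0$) is smooth (a global smoothing), so that near the singular point $V_t$ restricts to the Milnor fiber $F$. Let $\pi\colon \widetilde{V_0}\to V_0$ be the minimal good resolution, which near the singular point restricts to $\widetilde{S}$ with exceptional divisor $E$. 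I will record the two purely topological inputs at the outset: Milnor's fibration theorem gives $F\simeq \bigvee_\mu S^2$, hence $\chi_{\mathrm{top}}(F)=1+\mu$; and since $E$ is connected with $b_0=1$, $b_1=h$, $b_2=m$, its Euler characteristic is $\chi_{\mathrm{top}}(E)=1-h+m$, with $\widetilde{V_0}$ retracting onto $E$ locally.

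The heart of the argument is to compute three ``difference'' quantities between the two smooth projective surfaces $V_t$ and $\widetilde{V_0}$, all of which localize at the singular point. First, $\chi_{\mathrm{top}}(V_t)-\chi_{\mathrm{top}}(\widetilde{V_0})=\chi_{\mathrm{top}}(F)-\chi_{\mathrm{top}}(E)=\mu+h-m$, because both surfaces agree away from the modification and $\chi_{\mathrm{top}}$ is additive under gluing along the link $L$ (a closed $3$-manifold, so $\chi_{\mathrm{top}}(L)=0$). Second, the structure-sheaf Euler characteristics satisfy $\chi(\mathcal{O}_{V_t})-\chi(\mathcal{O}_{\widetilde{V_0}})=p_g$: flatness gives $\chi(\mathcal{O}_{V_t})=\chi(\mathcal{O}_{V_0})$, while the Leray spectral sequence for $\pi$ together with $R^1\pi_*\mathcal{O}$ having length $p_g$ at the singular point yields $\chi(\mathcal{O}_{\widetilde{V_0}})=\chi(\mathcal{O}_{V_0})-p_g$. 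Third, $K_{V_t}^2-K_{\widetilde{V_0}}^2=-K^2$: writing $K_{\widetilde{V_0}}=\pi^*K_{V_0}+\sum_i r_i E_i$ and using $\pi^*K_{V_0}\cdot E_i=0$ gives $K_{\widetilde{V_0}}^2=K_{V_0}^2+(\sum r_i E_i)^2=K_{V_0}^2+K^2$, while deformation invariance of $K^2$ in the flat family (valid since the hypersurface singularity is Gorenstein) gives $K_{V_t}^2=K_{V_0}^2$.

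With these in hand, the two formulas drop out by subtraction. Applying Noether's formula $12\chi(\mathcal{O})=K^2+\chi_{\mathrm{top}}$ to $V_t$ and to $\widetilde{V_0}$ and subtracting yields $12p_g=-K^2+(\mu+h-m)$, that is $\mu=K^2-h+m+12p_g$. Applying the Hirzebruch signature theorem $\sigma=\tfrac13\!\left(K^2-2\chi_{\mathrm{top}}\right)$ to both surfaces and subtracting gives $\sigma(V_t)-\sigma(\widetilde{V_0})=\tfrac13\!\left(-K^2-2(\mu+h-m)\right)$; on the other hand Novikov additivity of the signature (under gluing along $L$) gives $\sigma(V_t)-\sigma(\widetilde{V_0})=\sigma(F)-\sigma(\widetilde{S})=\sigma(F)+m$, using that the intersection form on $H_2(\widetilde{S})$ is negative definite of rank $m$, so $\sigma(\widetilde{S})=-m$. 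Combining the two expressions and solving for $\sigma(F)$ produces exactly $\sigma=-\tfrac13\!\left(2\mu+K^2+m+2h\right)$.

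The main obstacle is not the bookkeeping above but the analytic/geometric foundations it rests on: one must justify that the local invariants $\mu$, $p_g$, $K^2$ are genuinely computed by such a global model and are insensitive to its choice, and in particular that $\chi(\mathcal{O})$ and $K^2$ are constant in the flat family. This is where the Gorenstein (hypersurface) hypothesis is essential, guaranteeing that the relative canonical sheaf behaves well and that $K^2$ deforms. The globalization itself is routine for hypersurfaces---e.g.\ taking a projective hypersurface with the prescribed local equation and deforming within its linear system to a smooth member---but verifying that Novikov additivity and the sheaf-theoretic length computation apply cleanly across the link $L$ is the step requiring the most care.
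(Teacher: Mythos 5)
The paper does not prove this proposition at all: it is quoted verbatim from Durfee \cite{Dur} (with the $\mu$-formula originally due to Laufer) and stamped with a \qed, so there is no internal proof to compare against. Your blind reconstruction is, however, essentially the classical argument from the cited literature, and it is correct. The three localization computations check out: $\chi_{\mathrm{top}}(V_t)-\chi_{\mathrm{top}}(\widetilde{V_0})=(1+\mu)-(1-h+m)$ using Milnor's bouquet theorem and the retraction onto $E$; $\chi(\mathcal{O}_{V_t})-\chi(\mathcal{O}_{\widetilde{V_0}})=p_g$ via flatness and $R^1\pi_*\mathcal{O}$; and $K_{V_t}^2-K_{\widetilde{V_0}}^2=-K^2$ using that the hypersurface singularity is Gorenstein (for a degree-$d$ model in $\mathbb{P}^3$ this is immediate from $\omega_{V_t}=\mathcal{O}_{V_t}(d-4)$, so $K_{V_t}^2=d(d-4)^2$ for all $t$). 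Feeding these into Noether's formula gives $\mu=K^2-h+m+12p_g$, and feeding them into the Hirzebruch signature theorem together with Novikov additivity and $\sigma(\widetilde{S})=-m$ (negative definiteness of the resolution, by Mumford--Grauert) gives $\sigma=-\frac{1}{3}(2\mu+K^2+m+2h)$; I verified the arithmetic in both subtractions. This is precisely Durfee's strategy for the signature formula, and the globalization-plus-Noether route to Laufer's $\mu$-formula is the standard alternative to Laufer's original local-analytic proof. The foundational points you flag at the end (existence of the projective model with only the prescribed singularity, identification of the local piece of $V_t$ with the Milnor fiber, constancy of $\chi(\mathcal{O})$ and $K^2$ in the flat family) are genuinely where the content lies, but all are standard for isolated hypersurface singularities, so your outline is a complete and faithful reconstruction of the quoted result's proof rather than a new approach.
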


The singularity given by Equation~\eqref{eq:sing} is given as a
ramified cover along a singular plane curve. For an isolated plane
curve singularity the Milnor number satisfies the following equation.

 \begin{proposition}[\cite{Her}]
 For an isolated plane curve singularity $(C,0)\subset (\mathbb{C}^2,0)$
 \begin{equation*}
 \mu(C,0)=d(d-1)+\sum_{x\in Sing(\tilde{C})}\mu(\widetilde{C},x) +1-r,
 \end{equation*}
 where $r$ is the number of different tangent lines of $(C,0)$, $d$
 is the multiplicity of $C$ at $0$ and $\widetilde{C}$ is the proper
 transform of $C$ after one  blow-up at singular point $0$. \qed
 \end{proposition}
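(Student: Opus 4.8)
The plan is to reduce the statement to two classical facts about reduced plane curve singularities: Milnor's formula $\mu=2\delta-r_{\mathrm{br}}+1$ relating the Milnor number to the delta-invariant $\delta$ and the number $r_{\mathrm{br}}$ of local branches, and the behaviour of $\delta$ under a single blow-up. First I would record the geometry of the blow-up $\pi$. Since $(C,0)$ is an isolated singularity, $C$ is smooth away from the origin; as $\pi$ is an isomorphism over $\mathbb{C}^2\setminus\{0\}$, every singular point of the proper transform $\widetilde{C}$ lies on the exceptional divisor $E\cong\mathbb{P}^1$. The tangent cone of $C$ at $0$ is cut out by the lowest-degree form of $f$ (of degree $d=\mathrm{mult}_0 C$), whose distinct linear factors correspond to the distinct tangent lines; these in turn are exactly the points of $\widetilde{C}\cap E$. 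Hence the number of points of $\widetilde{C}\cap E$ equals the number $r$ of distinct tangent lines.

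Next I would invoke the classical blow-up (genus-drop) formula for the delta-invariant,
\[
\delta(C,0)=\binom{d}{2}+\sum_{x\in\widetilde{C}\cap E}\delta(\widetilde{C},x),
\]
so that $2\binom{d}{2}=d(d-1)$. Applying Milnor's formula at each point $x\in\widetilde{C}\cap E$ gives $2\delta(\widetilde{C},x)=\mu(\widetilde{C},x)+r_x-1$, where $r_x$ is the number of branches of $\widetilde{C}$ at $x$; summing over the $r$ points of $\widetilde{C}\cap E$ yields
\[
2\sum_{x}\delta(\widetilde{C},x)=\sum_{x}\mu(\widetilde{C},x)+\Big(\sum_{x}r_x\Big)-r.
\]

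The final step is bookkeeping. Since $\pi$ is a local isomorphism along each branch, the branches of $C$ at $0$ are in bijection with the branches of $\widetilde{C}$ meeting $E$, so $\sum_{x}r_x=r_{\mathrm{br}}$. Substituting the two displayed identities into Milnor's formula $\mu(C,0)=2\delta(C,0)-r_{\mathrm{br}}+1$ makes the two $r_{\mathrm{br}}$ contributions cancel, and — using that smooth points of $\widetilde{C}$ on $E$ have $\mu=0$, so the sum may be restricted to $\mathrm{Sing}(\widetilde{C})$ — leaves exactly $\mu(C,0)=d(d-1)+\sum_{x\in\mathrm{Sing}(\widetilde{C})}\mu(\widetilde{C},x)+1-r$. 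The only delicate point, and the main obstacle, is the simultaneous tracking of the three counts (the global branch number $r_{\mathrm{br}}$, the local branch numbers $r_x$, and the number $r$ of intersection points, the last of which rests on the identification ``intersection points $=$ distinct tangent lines''); it is their precise cancellation that produces the asymmetric correction term $+1-r$.
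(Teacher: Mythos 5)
Your proof is correct, but note that the paper does not actually prove this proposition: it is quoted from Melle-Hernandez \cite{Her} and closed with a \qed, so the paper's ``proof'' is a citation. Your argument supplies a genuine, self-contained derivation from two classical facts, namely Milnor's formula $\mu = 2\delta - r_{\mathrm{br}} + 1$ for reduced plane curve germs and Noether's blow-up formula
\[
\delta(C,0)=\binom{d}{2}+\sum_{x\in\widetilde{C}\cap E}\delta(\widetilde{C},x),
\]
and the bookkeeping is handled correctly: the identification of $\widetilde{C}\cap E$ with the set of distinct tangent lines, the bijection between branches of $C$ at $0$ and branches of $\widetilde{C}$ along $E$ (so that $\sum_x r_x = r_{\mathrm{br}}$), and the restriction of the sum to $\mathrm{Sing}(\widetilde{C})$ because $\mu=0$ at smooth points are all exactly the observations needed, and the two occurrences of $r_{\mathrm{br}}$ do cancel to produce the asymmetric correction $+1-r$. (The implicit use of reducedness --- an isolated plane curve singularity is automatically reduced, so Milnor's formula applies both at $0$ and at each $x$ --- is legitimate and worth stating explicitly.) This is essentially the standard route to this formula in the literature; what your write-up buys over the paper's bare citation is a short verifiable proof, at the cost of taking the two classical ingredients as black boxes.
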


Regarding the first Betti number of an isolated singualrity we have
\begin{proposition}[\cite{Gr-St}]\label{prop:grst}
  Let $X_t$ be the Milnor fiber of a smoothing of a pure-dimensional isolated
  normal surface singularity $(X_0,0)$, then $b_1(X_t)=0$. \qed
\end{proposition}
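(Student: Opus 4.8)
The plan is to prove the equivalent statement $H^1(X_t;\mathbb{C})=0$ by analysing the mixed Hodge structure carried by the cohomology of the Milnor fibre. As a preliminary reduction, observe that $X_t$ is a smooth Stein space of complex dimension $2$, so by Cartan's Theorem B it has the homotopy type of a CW complex of real dimension at most $2$; in particular $H^k(X_t)=0$ for $k\ge 3$ and $H_2(X_t)$ is free, so that $b_1(X_t)=\dim H^1(X_t)$ is the only Betti number in the relevant range not already controlled by elementary means. Thus everything reduces to the vanishing of $H^1$.

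First I would realise the smoothing as the general fibre of a one-parameter degeneration $f\colon \mathcal{X}\to\Delta$ with $f^{-1}(0)=(X_0,0)$, and, after a finite base change $t=\tau^e$ and a resolution of the total space, pass to a semistable model $g\colon Z\to\Delta'$ whose central fibre $Z_0$ is a reduced normal crossing divisor and whose general fibre is the Milnor fibre $X_t$. By the theory of Schmid and Steenbrink, $H^1(X_t;\mathbb{C})$ then carries a canonical limit mixed Hodge structure whose monodromy weight filtration is centred at $1$, so that only the weights $0,1,2$ occur, and the logarithm $N$ of the unipotent part of the monodromy induces an isomorphism $N\colon \mathrm{Gr}^W_2 \xrightarrow{\ \sim\ } \mathrm{Gr}^W_0$.

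The key structural observation is that the whole problem collapses onto the lowest piece of the Hodge filtration. The weight filtration forces $\mathrm{Gr}^W_0 H^1=H^{0,0}$ and $\mathrm{Gr}^W_1 H^1=H^{1,0}\oplus H^{0,1}$, while the isomorphism $N$ above gives $\mathrm{Gr}^W_2 H^1\cong H^{1,1}$ (in particular $H^{0,2}=0$); consequently the Hodge pieces with $p=0$ are exactly $H^{0,0}$ and $H^{0,1}$, and together they constitute $\mathrm{Gr}^0_F H^1$. If I can show that this lowest Hodge piece vanishes, then $H^{0,0}=\mathrm{Gr}^W_0=0$ and $H^{0,1}=0$, whence $H^{1,0}=\overline{H^{0,1}}=0$ by the symmetry of the Hodge structure, so $\mathrm{Gr}^W_1=0$; finally $\mathrm{Gr}^W_2\cong\mathrm{Gr}^W_0=0$ by $N$, and therefore $H^1(X_t;\mathbb{C})=0$. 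The tool I would invoke at this point is an identification of the lowest Hodge piece with a coherent cohomology group, $\mathrm{Gr}^0_F H^1 \cong H^1(Z_0,\mathcal{O}_{Z_0})$, expressing the bottom of the limit Hodge filtration as an $H^1(\mathcal{O})$ on the central fibre of the resolved total space.

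The main obstacle, and the step where the hypotheses are genuinely used, is the vanishing $H^1(Z_0,\mathcal{O}_{Z_0})=0$. This is where normality enters: since $(X_0,0)$ is a normal surface singularity it is Cohen--Macaulay of depth $\ge 2$, and I would exploit this, together with the fact that the modification $Z\to\mathcal{X}$ lies over the Stein space $\mathcal{X}$, to control the relevant direct image sheaves and conclude that the coherent $H^1$ of the central fibre vanishes. It is worth noting that the geometric genus $p_g$, which can be nonzero, is confined to $H^2(Z_0,\mathcal{O}_{Z_0})$ and hence contributes only to $H^2(X_t)$, fully consistently with the claim. I expect this coherent-cohomology vanishing, rather than the Hodge-theoretic bookkeeping, to be the hard technical core: once the limit mixed Hodge structure is in place, the preceding reductions are formal.
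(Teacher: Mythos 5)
The paper offers no proof of this proposition at all: it is quoted from Greuel--Steenbrink \cite{Gr-St} with a \qed, so the only question is whether your proposal would stand on its own. Your Hodge-theoretic bookkeeping is correct and is in fact the skeleton of the argument in the cited reference: weights $0,1,2$ on $H^1$ of the Milnor fibre, the isomorphism $N\colon \mathrm{Gr}^W_2\to\mathrm{Gr}^W_0$, the observation that $\mathrm{Gr}^0_F H^1=0$ kills $H^{0,0}$ and $H^{0,1}$, hence $H^{1,0}$ by conjugation and $\mathrm{Gr}^W_2$ by the $N$-isomorphism. But as a proof the proposal has a genuine gap, and you name it yourself in the last paragraph: the vanishing $H^1(Z_0,\mathcal{O}_{Z_0})=0$ is never established, only ``expected'' to follow from normality via unspecified direct-image arguments. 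That vanishing \emph{is} the theorem, not a technical appendix: normality (depth $\geq 2$ at the singular point) enters nowhere else, and relating the coherent cohomology of the special fibre of the semistable model --- which arises after a base change, normalization and resolution over which you have no a priori control --- back to the depth of $\mathcal{O}_{X_0,0}$ is precisely the content of the Greuel--Steenbrink paper. A reduction of the statement to its hardest step, followed by the assertion that the hardest step should work, is not a proof.

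There is a second, quieter gap in the same direction. The package you invoke --- the limit mixed Hodge structure with monodromy weight filtration centred at $1$, the isomorphisms $N^j\colon\mathrm{Gr}^W_{1+j}\to\mathrm{Gr}^W_{1-j}$, and the identification $\mathrm{Gr}^0_F H^1(X_\infty)\cong H^1(Z_0,\mathcal{O}_{Z_0})$ --- is the standard theory for \emph{projective} semistable degenerations. Your degeneration is local: the Milnor fibre is an open Stein surface and $Z_0$ is non-compact, and for open families neither the weight-monodromy structure nor the coherent description of $\mathrm{Gr}^0_F$ comes for free. Greuel and Steenbrink must first globalize (compactify) the smoothing to a projective degeneration and compare the local and global cohomologies before any of this machinery applies; your proposal passes over this silently by citing ``Schmid and Steenbrink'' as if the local case were covered. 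So both the applicability of the Hodge-theoretic framework and the key coherent vanishing remain to be supplied; what you have is the correct strategy of \cite{Gr-St} with its two substantive steps left open.
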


Using the above formulae, for the singularity $(S,0)$ specified by the
function of Equation~\eqref{eq:sing} we have
\begin{itemize}
\item The Milnor number $\mu=b_2(X_t)=(N-2)( (s+t)(s+t-1)+(N-1)t(t-1)+1-s-t)$.
\item
The signature of the Milnor fiber is equal to
$\sigma=-\frac{1}{3}(2\mu +K^2 +m+2h)$.
\end{itemize}
For the specialization $s=3, t=30N-33$ (and $N-1$ is not divisible by 3) these
data become
\begin{itemize}
\item $\mu (f+z^{N-1})=900N^4-3810N^3 +5292N^2-2705N+322$,
\item $\sigma=-300N^4+960N^3- \frac{2348}{3}N^2 + \frac{379}{3}N -2$.
\end{itemize}

\subsection*{Symplectic surgery and the singularity}
Next, we construct a symplectic manifold which contains the curve
configuration $(A,B)$ described above.
According to \cite[Theorem~1]{B-K-M} there is a surface bundle $X\to \Sigma
_{N-2}$ with fiber genus $15N^2-47N+34$ over the surface with genus $N-2$ such
that there is a section with self-intersection $-N$.  Let $M$ denote the
blow-up of $X$ in a fiber. The fiber passing throgh the blown-up point,
together with a section now provides the configuration of two curves $(A, B)$
with intersection patterns as in the resolution graph of the singularity given
by Equation~\eqref{eq:sing}.

Applying the symplectic surgery operation of replacing the
neighbourhood $\nu (A\cup B)$ with the smoothing $W$ of the corresponding
singularity, we get a symplectic 4-manifold $M_W$. Since
the embedding map $A\cup B \to X$ is onto on the first homology,
Proposition~\ref{prop:grst} implies that $b_1(M_W)=0$.

\begin{remark}
Symplectic 4-manifolds containing similar configurations of
symplectic submanifolds can be found near the Bogomolov-Miyaoka-Yau (BMY)
line $c_1^2=9\chi _h$. (For 4-manifold on the BMY line, see \cite{St}.)
We hope that using the symplectic surgery operation
discussed in this paper, one will be able to construct symplectic manifolds
with $b_1=0$ (or even with $\pi _1 =0$) near the BMY line.
We hope to return to this question  in a future project.
\end{remark}

\end{document}